\tikzset{%
    symbol/.style={%
        ,draw=none
        ,every to/.append style={%
            edge node={node [sloped, allow upside down, auto=false]{$#1$}}}
    }
}
\DeclareMathOperator{\Ker}{Ker}
\DeclareMathOperator{\Mat}{Mat}
\DeclareMathOperator{\Gal}{Gal}
\newtheorem{theorem}{Theorem}
\newtheorem{lemma}[theorem]{Lemma}
\newtheorem{proposition}[theorem]{Proposition}
\newtheorem{corollary}[theorem]{Corollary}
\theoremstyle{definition}
\newtheorem{definition}[theorem]{Definition}
\theoremstyle{remark}
\newtheorem{remark}[theorem]{Remark}
\numberwithin{equation}{section}
\begin{document}

\title[On rectangular unimodular matrices over the algebraic integers]{On Rectangular unimodular matrices over the ring of algebraic integers}

\author[G. Micheli]{Giacomo Micheli}
\address{Mathematical Institute\\
University of Oxford \\
Woodstock Rd\\ 
Oxford OX2 6GG, United Kingdom
}
\email{giacomo.micheli@maths.ox.ac.uk}

\author[V. Weger]{Violetta Weger}
\address{Institute of Mathematics\\
University of Zurich\\
Winterthurerstrasse 190\\
8057 Zurich, Switzerland\\
}
\email{violetta.weger@math.uzh.ch}

\thanks{The first author is thankful to Swiss National Science Foundation grant number 171248.}
\subjclass[2010]{}

\keywords{Number fields; Densities; Unimodular matrices.}

\maketitle

\begin{abstract}
Let $n$ and $m$ be positive integers such that $n<m$. In this paper we compute the density of rectangular  unimodular $n$ by $m$  matrices over the ring of algebraic integers of a number field.
\end{abstract}

\section{Introduction}
\label{sec:introduction}

Given an $n \times m$ matrix $M$ over an integral domain $\mathcal{R}$, with $n < m$, one of the natural questions arising is whether one can adjoin $m-n$ rows to this matrix, such that the new $m \times m$ matrix $\overline{M}$ is invertible, i.e. the determinant of $\overline{M}$ is a unit in $\mathcal{R}$. Matrices with this property are called rectangular unimodular matrices. It is worth mentioning that the rectangular unimodularity property has been deeply studied in the past: one of the most distinguished results is  the Quillen-Suslin Theorem \cite{projmod,suslin1974projective},
previously known under the name of Serre's Conjecture. This paper deals with the problem of computing the ``probability'' that a randomly chosen rectangular matrix is unimodular. 
Since no uniform probability distribution is allowed over $\mathbb Z$, one introduces the notion of density.
The density of a set $S \subset \mathbb{Z}^d$ is defined to be 
\begin{equation*}
\rho(S) = \lim_{B \rightarrow \infty} \frac{|S \cap [-B,B[^d|}{(2B)^d}
\end{equation*}
if the limit exists.  
In \cite{bib:maze2011natural} the authors compute the density of rectangular unimodular matrices over the ring of rational integers $\mathbb Z$. Unfortunately, the proof of \cite[Proposition 1]{bib:maze2011natural} is flawed as the inequality \cite[Equation 2]{bib:maze2011natural} is wrong: a counterexample is simply obtained by setting $S_i=\{i\}$ for all $i\in \mathbb Z$ (other counterexamples are of course possible). We should notice that \cite[Equation 2]{bib:maze2011natural} is correct for example for finite unions (which is not the case in the proof of \cite[Proposition 1]{bib:maze2011natural}).

The purpose of this paper is twofold: on one side we fix the proof of \cite{bib:maze2011natural} and on the other side we generalise the result to unimodular matrices over the ring of algebraic integers.

For the case $n=1,m=2$ and $\mathcal R=\mathbb Z$ results in this direction were found independently by Ces\'{a}ro \cite{ce3, ce1} and by Mertens \cite{mertens1874ueber}, who proved that the density of  coprime pairs of integers is equal to $\frac{1}{\zeta(2)}$, where $\zeta$ denotes the  Riemann zeta function. 
This has been generalised in \cite{bib:nymann1972probability} to coprime $m$-tuples of integers and in \cite{ferraguti2016mertens} to coprime $m$-tuples of algebraic integers of a number field $K$. Interestingly similar to the result of Mertens-Ces\'{a}ro, the density of coprime $m$-tuples of algebraic integers is $\frac{1}{\zeta_K(m)}$, where $\zeta_K$ denotes the Dedekind zeta function of $K$. Recently, the result in \cite{ferraguti2016mertens} was used  in \cite{albrecht2016subfield}  for a cryptanalysis of the NTRU cryptosystem.

More recent results on densities can be found in  \cite{dotti2016eisenstein, ferraguti2018set, guo2013probability, lieb2018uniform, maze2011natural, micheli2017local,  micheli2016densityeis,   micheli162,  micheli2016density, bib:BS, wang2017smith}.

The main purpose of this paper is to show that the density of $n \times m$ rectangular unimodular matrices over the ring of algebraic integers  $\mathcal{O}_K$ of a number field $K$ is
\begin{equation*}
\prod_{i=0}^{n-1} \frac{1}{\zeta_K(m-i)}.
\end{equation*}

The main tool for proving this is the local to global principle by  Poonen and Stoll \cite[Lemma 1]{bib:loctoglob}, where it is shown that the computation of some densities can be reduced to measuring certain appropriately chosen subsets of the $p$-adic integers. This technique is an evolution of the Ekedahl's Sieve \cite{torsten1991infinite}. 
An even more general result is \cite[Proposition 3.2]{bright2016failures}.

The paper is organised as follows: in Section \ref{sec2} we recall some basic facts from algebraic number theory and introduce the fundamental tool used in the proof of the main result. 
 In Section \ref{sec3} we  state and prove Theorem \ref{density_unimodular_matrices}, the main theorem of this paper.
 Finally, in Section \ref{sec4} we present  some experiments which show how accurate the density result is compared with the actual probability of selecting a random unimodular matrix with bounded entries.

\section{Preliminaries}\label{sec2}

We first want to fix some notations and state some important facts from algebraic number theory.
Let $\mathbb{N}$ be the set of positive integers.
Let $K$ be a number field with degree $[K: \mathbb{Q}]= k$. 
Let us denote by $\mathcal{O}_K$ the ring of algebraic integers of $K$. 
Note that $\mathcal{O}_K \cong \mathbb{Z}^k$ as $\mathbb{Z}$-modules, hence $\mathcal{O}_K$ has a $\mathbb{Z}$-basis, called integral basis.
Every non-zero prime ideal $\mathfrak{p}$ of $\mathcal{O}_K$ intersects $\mathbb{Z}$ in a prime ideal $p\mathbb{Z}$ for some prime $p$. In this case we say that $\mathfrak{p}$ is lying above $p$ and denote this by $\mathfrak{p} \mid p$. The residue field $\mathcal{O}_K/\mathfrak{p}$ is a finite extension of $\mathbb{Z}/p\mathbb{Z} \cong \mathbb{F}_p$: in fact,  
$\mathcal{O}_K/\mathfrak{p} \cong \mathbb{F}_{p^{\deg(\mathfrak{p})}}$, where $\deg(\mathfrak{p}) = [\mathcal{O}_K/\mathfrak{p} : \mathbb{F}_p]$ denotes the inertia degree, for $\mathfrak{p} \mid p$.

\begin{definition}
Let $K$ be a number field and let $L$ be the Galois closure of $K/\mathbb{Q}$. The norm map for the extension field $K / \mathbb{Q}$ is defined as
\begin{eqnarray*}
N_{K/\mathbb{Q}}: K &\to & \mathbb{Q} \\
 a & \mapsto & \prod_{\sigma \in \Gal(L / \mathbb{Q})} \sigma(a).
\end{eqnarray*}
Let $ \ell$ be a positive integer.
Observe that the norm can be extended to the field of rational functions  $K(x_1, \ldots, x_{\ell})$, since it holds that 
\begin{equation*}
\Gal(L / \mathbb{Q}) \cong \Gal( L(x_1, \ldots, x_{\ell}) / \mathbb{Q}(x_1, \ldots, x_{\ell})).
\end{equation*}
For the sake of completeness, let us make explicit the definition of such norm for the extension field $K(x_1, \ldots, x_{\ell})/\mathbb{Q}(x_1, \ldots, x_{\ell})$. For this we need the multi-index notation: let $a = (a_1, \ldots, a_{\ell}) \in \mathbb{N}^{\ell}$, then we denote by 
\begin{eqnarray*}
| a | &=& \sum_{i=1}^{\ell} a_i, \\
x^a &=& \prod_{i=1}^{\ell}x_i^{a_i}.
\end{eqnarray*} 
Let $\varphi \in K[x_1, \ldots, x_{\ell}]$, let $c_a=c_{(a_1, \ldots, a_{\ell})} \in K$, then one can write $\varphi$ as 
\begin{equation*} 
\varphi(x_1, \ldots, x_{\ell})=  \sum_{\substack{a \in \mathbb{N}^{\ell} \\ |a| \leq d }} c_ax^a,
\end{equation*} 
for some $d \in \mathbb{N}$.
  The norm map is then given by 
\begin{eqnarray*}
N: K(x_1, \ldots, x_{\ell}) & \to & \mathbb{Q}(x_1, \ldots, x_{\ell}) \\
 \sum_{\substack{a \in \mathbb{N}^{\ell} \\ \mid a \mid \leq d}} c_ax^a& \mapsto & \prod_{\sigma \in \Gal(L / \mathbb{Q})} \sum_{\substack{a \in \mathbb{N}^{\ell} \\ \mid a \mid \leq d}} \sigma(c_a)x^a.
\end{eqnarray*}
\end{definition}

Another important fact is the following: if $a \in \mathfrak{p}$, where $ \mathfrak{p}$ is a non-zero prime ideal in $\mathcal{O}_K$, then it holds that $N_{K/\mathbb{Q}}(a) \equiv 0 \mod p$ for $\mathfrak{p} \mid p$.

Let  $\zeta_K$ denote  the Dedekind zeta function over the number field $K$, so that $\zeta_{\mathbb{Q}} = \zeta$ is the classical Riemann zeta function. 
Let us denote by $\mathbb{Z}_p$ the ring of $p$-adic integers.

Let us define rectangular unimodular matrices.

\begin{definition}
Let  $\mathcal{R}$ be a domain and  $n <m \in \mathbb{N}$. Let $M \in \Mat_{n \times m}(\mathcal{R})$. $M$ is said to be rectangular unimodular, if there exist $m-n$ rows in $\mathcal R^m$, such that when adjoining these rows to $M$ the resulting $m \times m$ matrix $\overline{M}$ is invertible, i.e. $\det(\overline{M})$ is a unit in $\mathcal R$.
\end{definition}

From \cite{gustafson1981matrix} one has the following characterization of rectangular unimodular matrices over Dedekind domains;

\begin{proposition}
Let $\mathcal{D}$ be a Dedekind domain and $n < m \in \mathbb{N}$. Let $M \in \Mat_{n \times m}(\mathcal{D})$. $M$ is rectangular unimodular, if and only if the ideal generated by all the $n \times n$ minors of $M$ is $\mathcal{D}$.
\end{proposition}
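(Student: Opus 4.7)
The plan is to prove the two implications separately: the ``only if'' direction is a one-line Laplace-expansion argument, while the ``if'' direction requires the structure theory of finitely generated projective modules over Dedekind domains.

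For the ``only if'' direction, I would pick an invertible $m\times m$ completion $\overline{M}$ of $M$ with $\det(\overline{M})\in \mathcal{D}^{\times}$ and expand this determinant via Laplace along the $n$ rows coming from $M$. This writes $\det(\overline{M})$ as a $\mathcal{D}$-linear combination of the $n\times n$ minors of $M$, so the ideal $I_n(M)$ generated by those minors contains a unit and must equal $\mathcal{D}$.

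The substantive half is the converse. I would encode $M$ as the $\mathcal{D}$-linear map $\varphi\colon \mathcal{D}^n\to\mathcal{D}^m$ represented by $M^T$ (so that the image of $\varphi$ is spanned by the rows of $M$) and examine the cokernel $C=\mathcal{D}^m/\varphi(\mathcal{D}^n)$. The hypothesis $I_n(M)=\mathcal{D}$ is equivalent to the assertion that at every prime $\mathfrak{p}$ of $\mathcal{D}$ some $n\times n$ minor of $M$ is a unit in $\mathcal{D}_{\mathfrak{p}}$; localizing at $\mathfrak{p}$, this allows $M^T$ to be completed to an invertible $m\times m$ matrix over $\mathcal{D}_{\mathfrak{p}}$, so $C_{\mathfrak{p}}\cong \mathcal{D}_{\mathfrak{p}}^{m-n}$. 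Hence $C$ is finitely generated and locally free of rank $m-n$, therefore projective, and the defining exact sequence $0\to\mathcal{D}^n\to\mathcal{D}^m\to C\to 0$ splits, yielding a decomposition $\mathcal{D}^m\cong \varphi(\mathcal{D}^n)\oplus C'$ with $C'\cong C$.

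The main obstacle is to upgrade ``$C$ projective of rank $m-n$'' to ``$C$ free of rank $m-n$''; only in the latter case can I choose a $\mathcal{D}$-basis of $C'$ and adjoin its $m-n$ elements as the missing rows of $\overline{M}$. The tool I intend to use is Steinitz' theorem for Dedekind domains, which gives $C\cong\mathcal{D}^{m-n-1}\oplus \mathfrak{a}$ for some fractional ideal $\mathfrak{a}$. Taking top exterior powers and using that $\varphi$ is injective (because some $n\times n$ minor of $M$ is non-zero, so $\bigwedge^n \varphi(\mathcal{D}^n)\cong \mathcal{D}$), the splitting above yields $\mathcal{D}\cong \bigwedge^{m}\mathcal{D}^m\cong \bigwedge^n\varphi(\mathcal{D}^n)\otimes \bigwedge^{m-n}C'\cong \mathfrak{a}$. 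Hence the Steinitz class $\mathfrak{a}$ is trivial, $C$ is free, and any $\mathcal{D}$-basis of $C'$ supplies the required $m-n$ supplementary rows, making $\det(\overline{M})$ a unit by construction.
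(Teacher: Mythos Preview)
Your argument is correct. The Laplace expansion handles the forward direction cleanly, and for the converse your chain of reasoning is sound: the hypothesis $I_n(M)=\mathcal{D}$ forces some $n\times n$ minor to be a unit at each localization, which makes the cokernel $C$ locally free of constant rank $m-n$ and hence projective; the sequence splits, and the determinant (top exterior power) comparison $\mathcal{D}\cong\bigwedge^m\mathcal{D}^m\cong\bigwedge^n\mathcal{D}^n\otimes\bigwedge^{m-n}C\cong\mathfrak{a}$ kills the Steinitz class, so $C$ is free and a basis of the complement furnishes the missing rows. One cosmetic remark: in the local step you might spell out the completion explicitly---if the minor on columns $j_1<\cdots<j_n$ is a unit in $\mathcal{D}_\mathfrak{p}$, adjoining the standard basis vectors $e_j$ for $j\notin\{j_1,\dots,j_n\}$ gives a matrix whose determinant is (up to sign) that minor---but this is clear enough as written.

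As for comparison with the paper: there is nothing to compare. The paper does not prove this proposition at all; it merely quotes it from \cite{gustafson1981matrix} and moves on. Your write-up is therefore a genuine addition rather than a reworking, and the Steinitz/exterior-power approach you chose is the standard and efficient one for Dedekind domains.
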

For example coprime $m$-tuples over $\mathcal{O}_K$ are $1 \times m$ rectangular unimodular matrices, as $\mathcal{O}_K$ is a Dedekind domain. 
It is then immediate to characterize rectangular unimodular matrices in terms of prime ideals:
\begin{proposition}\label{characterization_unimodular}
Let $\mathcal{D}$ be a Dedekind domain and $n<m \in \mathbb{N}$.
$M \in \Mat_{n \times m}(\mathcal{D})$ is rectangular unimodular, if and only if $M \mod \mathfrak{p}$ has full rank for any $\mathfrak{p}$ non-zero prime ideal of $\mathcal{D}$.
\end{proposition}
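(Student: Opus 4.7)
The plan is to reduce the statement to the preceding proposition, which already characterises rectangular unimodular matrices in terms of the ideal $I \subseteq \mathcal{D}$ generated by all $n \times n$ minors of $M$: namely, $M$ is rectangular unimodular iff $I = \mathcal{D}$. Once this reformulation is in place, the proof becomes a short exercise in commutative algebra, relying essentially only on the Dedekind hypothesis and on the Cauchy--Binet-style identity that minors commute with the quotient map $\mathcal{D} \to \mathcal{D}/\mathfrak{p}$.

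First I would show that $I = \mathcal{D}$ is equivalent to $I \not\subseteq \mathfrak{p}$ for every non-zero prime ideal $\mathfrak{p}$ of $\mathcal{D}$. The forward direction is immediate since $1 \notin \mathfrak{p}$. For the converse, suppose $I \subsetneq \mathcal{D}$; then $I$ is contained in some maximal ideal. Because $\mathcal{D}$ is a Dedekind domain (and not a field, in which case the statement is vacuous), every maximal ideal is a non-zero prime ideal, contradicting the hypothesis.

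Second I would show that, for any non-zero prime $\mathfrak{p}$, the condition $I \not\subseteq \mathfrak{p}$ is equivalent to $M \bmod \mathfrak{p}$ having full rank $n$ over the residue field $\mathcal{D}/\mathfrak{p}$. This uses the fact that the $n\times n$ minors of the reduced matrix $M \bmod \mathfrak{p}$ are precisely the images, under the projection $\mathcal{D} \to \mathcal{D}/\mathfrak{p}$, of the $n\times n$ minors of $M$. Hence $I \not\subseteq \mathfrak{p}$ iff at least one minor is non-zero in $\mathcal{D}/\mathfrak{p}$, and a well-known fact of linear algebra over fields says that an $n \times m$ matrix with $n \leq m$ has rank $n$ iff at least one of its $n\times n$ minors is non-zero.

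Chaining the two equivalences gives the result. I do not anticipate any real obstacle in carrying this out: the only subtle point is ensuring that ``contained in a maximal ideal'' can be replaced by ``contained in a non-zero prime ideal'', which is exactly what the Dedekind hypothesis provides. Everything else is formal, and the overall proof should take only a handful of lines.
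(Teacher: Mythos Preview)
Your proposal is correct and matches the paper's intent exactly: the paper does not give an explicit proof of this proposition but simply notes that it is ``immediate'' from the preceding characterization via the ideal of $n\times n$ minors, and your argument is precisely the short commutative-algebra verification that makes this immediacy explicit.
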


\begin{definition}
The density of a set $S \subset \mathbb{Z}^d$ is defined to be 
\begin{equation*}
\rho(S) = \lim_{B \rightarrow \infty} \frac{|S \cap [-B,B[^d|}{(2B)^d}
\end{equation*}
if the limit exists. Then one defines the upper density $\bar{\rho}$ and the lower density $\underline{\rho}$ equivalently with the $\limsup$ and the $\liminf$ respectively. 
\end{definition}

We define the density over the ring of algebraic integers in the same way as in \cite{ferraguti2016mertens}.
\begin{definition}\label{definition_density}
Let  $\mathbb{E} = \{e_1, \ldots, e_k\}$ be an integral basis for $\mathcal{O}_K$.
Define
\begin{equation*}
\mathcal{O}[B, \mathbb{E}] = \left\{ \sum_{i=1}^k  a_ie_i \mid a_i \in [-B,B[ \cap \mathbb{Z} \right\}.
\end{equation*}
Let $m \in \mathbb{N}$ and  $S$ a subset of $\mathcal{O}_K^m$, we define the upper density of $S$ with respect to $\mathbb{E}$ to be 
\begin{equation*}
\bar{\rho}_{\mathbb{E}}(S) = \limsup_{B \to \infty} \frac{\mid \mathcal{O}[B,\mathbb{E}]^m \cap S \mid}{(2B)^{mk}}
\end{equation*}
and the lower density of $S$ with respect to $\mathbb{E}$ to be 
\begin{equation*}
\underline{\rho}_{\mathbb{E}}(S) = \liminf_{B \to \infty} \frac{\mid \mathcal{O}[B,\mathbb{E}]^m \cap S \mid}{(2B)^{mk}}.
\end{equation*}
We say that $S$ has density $d$ with respect to $\mathbb{E}$, if 
\begin{equation*}
\bar{\rho}_{\mathbb{E}}(S) = \underline{\rho}_{\mathbb{E}}(S) = \rho_{\mathbb{E}}(S) =  d.
\end{equation*}
\end{definition}
Whenever this density is independent of the chosen basis $\mathbb{E}$, we denote the density of a set $S$ by $\rho(S)$ and we say that $S$ has density $\rho(S)$.

For convenience, let us restate here the local to global principle by Poonen and Stoll in  \cite{bib:loctoglob}, since this will be the main tool in the proof of Theorem \ref{density_unimodular_matrices}. 
If $S$ is a set, then we denote by $2^S$ its powerset. Let $M_{\mathbb{Q}}= \{\infty \} \cup \{p \mid p \  \text{prime} \}$ be the set of all places of $\mathbb{Q}$, where we denote by $\infty$ the unique archimedean place of $\mathbb{Q}$. Let $\mu_{\infty}$ denote the Lebesgue measure on $\mathbb{R}^d$ and $\mu_p$ the normalized Haar measure on $\mathbb{Z}_p^d$.  For $T$ a subset of a metric space, let us denote by $\partial T$ its boundary.
By $\mathbb{R}_{\geq0}$ we denote the non-negative reals.
\begin{theorem}[\text{\cite[Lemma 1]{bib:loctoglob}}]\label{poonen}
Let $d$ be a positive integer. Let $U_{\infty} \subset \mathbb{R}^d$, such that $\mathbb{R}_{\geq0} \cdot U_{\infty} = U_{\infty}$ and $\mu_{\infty}(\partial(U_{\infty}))=0.$ Let $s_{\infty}= \frac{1}{2^d}\mu_{\infty}(U_{\infty} \cap [-1,1]^d)$. 
For each prime $p$, let $U_p \subset \mathbb{Z}_p^d$, such that $\mu_p(\partial(U_p)) =0$ and define $s_p = \mu_p(U_p)$. 
Define the following map 
\begin{eqnarray*}
P: \mathbb{Z}^d   &\rightarrow &  2^{M_{\mathbb{Q}}} \\
a  &\mapsto & \left\{ \nu \in M_{\mathbb{Q}} \mid a \in U_{\nu} \right\}.
\end{eqnarray*}
If the following is satisfied:
\begin{equation} \label{densitiycond}
\lim_{M \rightarrow \infty} \bar{\rho}\left( \left\{ a \in \mathbb{Z}^d \mid a \in U_p \  \text{for some prime} \ p > M \right\} \right)=0,
\end{equation}
then:
\begin{itemize}
\item[i)] $\sum_{\nu \in M_{\mathbb{Q}}} s_{\nu}$ converges.
\item[ii)] For $\mathcal{S} \subset 2^{M_{\mathbb{Q}}},$  $\rho(P^{-1}(\mathcal{S}))$ exists, and defines a measure on $2^{M_{\mathbb{Q}}}$.
\item[iii)] For each finite set $\{S\} \subset 2^{M_{\mathbb{Q}}}$, we have that
\begin{equation*}
\rho(P^{-1}(\{S\})) = \prod_{\nu \in S} s_{\nu} \prod_{\nu \not\in S} (1-s_{\nu}), 
\end{equation*}
and if $\mathcal{S}$ consists of infinite subsets of $2^{M_{\mathbb{Q}}}$, then $\rho(P^{-1}(\mathcal{S}))=0.$
\end{itemize}
\end{theorem}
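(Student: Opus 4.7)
The plan is to combine a Chinese Remainder Theorem sieve across finitely many places with the tail control supplied by hypothesis (\ref{densitiycond}). The enabling observation is measure-theoretic: since $\mu_p(\partial U_p)=0$, for each $\epsilon>0$ one can approximate $U_p$ from inside and outside by clopen subsets of $\mathbb{Z}_p^d$, which are finite unions of cosets of $p^{N_p}\mathbb{Z}_p^d$ for some $N_p$. Consequently the condition ``$a\in U_p$'' reduces to a congruence condition on $a$ modulo $p^{N_p}$ (up to a set of density at most $\epsilon$), and similarly for its complement. The archimedean condition ``$a\in U_\infty$'' is well behaved as $B\to\infty$ thanks to the cone property $\mathbb{R}_{\geq 0}\cdot U_\infty=U_\infty$ together with $\mu_\infty(\partial U_\infty)=0$: the normalized count $|U_\infty\cap [-B,B)^d\cap \mathbb{Z}^d|/(2B)^d$ converges to $s_\infty$.

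First I would fix a finite $S\subset M_{\mathbb{Q}}$ and a cutoff $M$. Approximating each $U_p$ (or its complement) for $p\leq M$ by a clopen set and applying CRT modulo $\prod_{p\leq M}p^{N_p}$, the density of the set of $a\in\mathbb{Z}^d$ satisfying $a\in U_\nu$ for every $\nu\in S$ with $\nu\leq M$ and $a\notin U_\nu$ for every $\nu\notin S$ with $\nu\leq M$ is exactly
\begin{equation*}
\prod_{\nu\in S,\,\nu\leq M}s_\nu \prod_{\nu\notin S,\,\nu\leq M}(1-s_\nu),
\end{equation*}
up to an error coming from the clopen approximations, which can be made arbitrarily small by distributing an $\epsilon/2^i$ tolerance across primes.

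Next I would let $M\to\infty$. The symmetric difference between this truncated set and $P^{-1}(\{S\})$ is contained in $\{a:a\in U_p\text{ for some prime }p>M\}$, whose upper density vanishes by (\ref{densitiycond}). This yields the formula in (iii) and, by specializing to $S=\{p\}$ and summing, implies $\sum_{p>M}s_p\to 0$, proving (i); part (ii) then follows by verifying countable additivity using the formula from (iii) on cylinder events. For the ``infinite subsets'' case of (iii), any $a$ with $P(a)$ infinite lies in $U_p$ for arbitrarily large $p$, so $P^{-1}(\mathcal{S})\subset\{a:a\in U_p\text{ for some }p>M\}$ for every $M$, forcing its density to be zero.

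The chief technical obstacle is uniform control of the clopen-approximation errors as $M$ grows: one must ensure these errors sum to a vanishing quantity rather than accumulate, which is achieved by choosing the approximation at each prime with tolerance that decays sufficiently fast (say $\epsilon\cdot 2^{-\pi(p)}$, where $\pi$ is the prime-counting function). Once this bookkeeping is in place, (\ref{densitiycond}) takes care of the genuine tail in the primes, and the product formula in (iii) drops out.
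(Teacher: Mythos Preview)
The paper does not prove this statement at all: Theorem~\ref{poonen} is merely restated from \cite[Lemma~1]{bib:loctoglob} for the reader's convenience, with no argument supplied. So there is nothing in the paper to compare your proposal against.

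As a standalone proof sketch, your outline is essentially the standard one and is broadly correct. The clopen approximation at each finite place, the CRT to combine finitely many places, the cone property at infinity, and the tail bound via~\eqref{densitiycond} are exactly the right ingredients, and your treatment of the ``infinite subsets'' clause of (iii) is clean.

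One point deserves tightening. Your derivation of (i) reads ``by specializing to $S=\{p\}$ and summing,'' but the formula in (iii) for $S=\{p\}$ already contains the infinite product $\prod_{\nu\neq p}(1-s_\nu)$, whose positivity is equivalent to what you are trying to prove; so as written the argument is circular. The fix is to argue directly: for any finite list of primes $M<p_1<\cdots<p_r$, CRT plus the clopen approximation gives
\[
\rho\Bigl(\bigl\{a:\ a\in U_{p_i}\text{ for some }i\bigr\}\Bigr)=1-\prod_{i=1}^r(1-s_{p_i}),
\]
and this set is contained in $\{a:\ a\in U_p\text{ for some }p>M\}$, whose upper density is at most some $\epsilon_M\to 0$. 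Hence $\prod_{i}(1-s_{p_i})\ge 1-\epsilon_M$ uniformly in $r$, so $\prod_{p>M}(1-s_p)>0$ once $\epsilon_M<1$, which forces $\sum_{p>M}s_p<\infty$. With (i) in hand, the infinite product in (iii) is honest and the rest of your sketch goes through.
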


To show  that \eqref{densitiycond} is satisfied, one can often apply the following useful lemma, that can be deduced from the result in \cite{torsten1991infinite}.

\begin{lemma}[\text{\cite[Lemma 2]{bib:loctoglob}}]\label{showdens}
Let $d$ and $M$ be positive integers. Let $f,g \in \mathbb{Z}[x_1, \ldots, x_d]$ be relatively prime. Define
\begin{equation*}
S_M(f,g) = \left\{ a \in \mathbb{Z}^d \mid  f(a) \equiv g(a) \equiv 0 \mod p \ \text{for some prime} \ p > M \right\},
\end{equation*}
then
\begin{equation*}
\lim_{M \rightarrow \infty} \bar{\rho}(S_M(f,g)) = 0. 
\end{equation*}
\end{lemma}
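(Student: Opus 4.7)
The overall strategy is to exploit the coprimality of $f$ and $g$ to force their common zero locus modulo $p$ to have codimension at least two, which will yield a local density of order $O(1/p^2)$ at each large prime $p$. Summing over $p>M$ then produces a tail that tends to zero with $M$.

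First I would extract Bezout-type relations from coprimality. Since $f,g$ are coprime in $\mathbb{Z}[x_1,\ldots,x_d]$, they are coprime in the principal ideal domain $\mathbb{Q}(x_1,\ldots,\widehat{x_i},\ldots,x_d)[x_i]$ for each index $i$; after clearing denominators in the resulting Bezout identity, I obtain a nonzero polynomial $R_i\in\mathbb{Z}[x_1,\ldots,\widehat{x_i},\ldots,x_d]$ and $A_i,B_i\in\mathbb{Z}[x_1,\ldots,x_d]$ with $A_if+B_ig=R_i$. Hence any common zero of $f,g$ modulo $p$ is also a zero modulo $p$ of every $R_i$. A Schwartz--Zippel-style count (bound the zero-locus of $R_1$ modulo $p$ in $\mathbb{F}_p^{d-1}$ by $C_1p^{d-2}$, then at each such point use that $f(x_1,\overline{a_2},\ldots,\overline{a_d})$ has at most $\deg_{x_1}f$ roots in $\mathbb{F}_p$ whenever it is non-zero modulo $p$) then yields
\begin{equation*}
N_p:=\#\{a\in\mathbb{F}_p^d:f(a)\equiv g(a)\equiv 0\pmod p\}\;\leq\; Cp^{d-2}
\end{equation*}
for all $p$ exceeding some effective $p_0=p_0(f,g)$.

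The second half is a standard box-count: the number of $a\in[-B,B)^d\cap\mathbb{Z}^d$ with $p\mid f(a)$ and $p\mid g(a)$ is at most $N_p\lceil 2B/p\rceil^d$, which for $p\leq 2B$ gives $O(B^d/p^2)$. Because $|f(a)|,|g(a)|$ are polynomially bounded in $B$ on $[-B,B)^d$, only primes $p$ up to $\mathrm{poly}(B)$ can contribute at all, and the remaining contribution from those is negligible once divided by $(2B)^d$. Summing the geometric contributions yields
\begin{equation*}
\bar{\rho}(S_M(f,g))\;\leq\; C'\sum_{p>M}\frac{1}{p^2}\;\longrightarrow\;0\quad\text{as }M\to\infty.
\end{equation*}

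The main technical obstacle I anticipate is ensuring uniformity in $p$ of the estimate $N_p\leq Cp^{d-2}$: the coprimality of $\bar f$ and $\bar g$ may fail modulo finitely many ``bad'' primes (those dividing, for instance, the contents of the $R_i$ or leading coefficients appearing in the resultant computation). These primes can be safely ignored in the limit $M\to\infty$ since one takes $M$ beyond all of them; equivalently, one invokes the scheme-theoretic fact that the generic fiber of $V(f,g)\to\mathrm{Spec}\,\mathbb{Z}$ has dimension at most $d-2$, so all but finitely many closed fibers inherit this codimension bound.
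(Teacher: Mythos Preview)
The paper does not supply its own proof of this lemma: it is quoted as \cite[Lemma~2]{bib:loctoglob} (Poonen--Stoll), with a remark that it follows from Ekedahl's sieve \cite{torsten1991infinite}. There is therefore no in-paper argument to compare against; your sketch is in fact a reconstruction of the standard Ekedahl/Poonen--Stoll argument, and the overall strategy (resultants force $N_p=O(p^{d-2})$, then a box count gives local density $O(1/p^2)$, and one sums the tail) is the right one.

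One step in your outline is under-argued and, as written, does not go through. For primes $p>2B$ you say ``the remaining contribution from those is negligible once divided by $(2B)^d$'', but the bound $N_p\lceil 2B/p\rceil^d = N_p \le Cp^{d-2}$ summed over $2B<p\le \mathrm{poly}(B)$ is far larger than $B^d$. The usual repair is to argue pointwise in this range rather than via $N_p$: for each $(a_2,\ldots,a_d)$ with $R_1(a_2,\ldots,a_d)\ne 0$, any prime $p>2B$ dividing both $f(a)$ and $g(a)$ must divide this nonzero integer of size $O(B^{\deg R_1})$, so there are at most $\deg R_1$ such primes; for each of them, $f(x_1,a_2,\ldots,a_d)\bmod p$ (when nonzero) has at most $\deg_{x_1}f$ roots, each lifting to at most one $a_1\in[-B,B)$ since $p>2B$. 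This gives $O(B^{d-1})$ total from the large-prime range. You should also peel off explicitly the integer zero-loci $\{f(a)=0\}$, $\{R_1(a_2,\ldots,a_d)=0\}$, and the locus where $f(x_1,a_2,\ldots,a_d)$ vanishes identically in $x_1$; each has density zero by a direct Schwartz--Zippel count over $\mathbb{Z}$. With these refinements your proposal becomes a complete proof, matching the argument that the cited references give.
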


\section{The Density of Rectangular Unimodular Matrices}\label{sec3}

\begin{remark}\label{determinant_irreducible}
Let $\mathcal{R}$ be an integral domain and $n \in \mathbb{N}$.
Recall that, if  $X$ is an $n \times n$ polynomial matrix over $\mathcal{R}[x_{1,1}, \ldots, x_{n,n}]$ having as $(i,j)$ entry the variable $x_{i,j}$,
then $\det(X)  \in \mathcal{R}[x_{1,1}, \ldots, x_{n,n}]$ is irreducible.
\end{remark}

Let $\ell, k \in \mathbb{N}, f \in \mathbb{C}[x_1, \ldots, x_{\ell}]$ and $e=(e_1, \ldots, e_k) \in  \left( \mathbb{C} \setminus \{0\}\right)^k$.
In the new polynomial ring $\mathbb{C}[x_1^{(1)}, x_1^{(2)}, \ldots, x_{\ell}^{(k)}]$ let us denote by $f_e$
\begin{equation*}
f\left(\sum_{u=1}^k x_1^{(u)}e_u, \ldots, \sum_{u=1}^k x_{\ell}^{(u)}e_u\right).
\end{equation*}

\begin{lemma}\label{Basis_irreducible}
Let $\ell,k, f$ and $e$ be as above. If $f \in \mathbb{C}[x_1, \ldots, x_{\ell}]$ is irreducible, then $f_e$ is irreducible in $\mathbb{C}[x^{(1)}_1, x_1^{(2)}, \ldots, x^{(k)}_{\ell}]$. 
\end{lemma}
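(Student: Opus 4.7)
The plan is to reduce this to the standard fact that irreducibility is preserved both under invertible $\mathbb{C}$-linear changes of variables and under adjoining extra indeterminates to a polynomial ring over a UFD. The key idea is to find an invertible linear substitution that sends $f_e$ back to $f$ (in a distinguished subset of the new variables), so that its irreducibility becomes essentially a tautology.

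Concretely, since by hypothesis every coordinate of $e$ is nonzero, I would use $e_1$ to define the following change of variables: for each $i \in \{1,\dots,\ell\}$ set
\[
 y_i^{(1)} = \sum_{u=1}^{k} e_u\, x_i^{(u)}, \qquad y_i^{(u)} = x_i^{(u)} \ \text{for } u = 2,\dots,k.
\]
For fixed $i$, the $k \times k$ matrix expressing the $y_i^{(u)}$ in terms of the $x_i^{(u)}$ is triangular with $e_1$ in one corner and $1$'s on the rest of the diagonal, hence has determinant $e_1 \neq 0$. Consequently, the substitution $x_i^{(u)} \mapsto (\text{linear expressions in the } y_i^{(u)})$ obtained by inverting this system induces a $\mathbb{C}$-algebra automorphism $\Phi$ of $\mathbb{C}[x_1^{(1)}, \dots, x_\ell^{(k)}] \cong \mathbb{C}[y_1^{(1)}, \dots, y_\ell^{(k)}]$. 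By construction, $\Phi(f_e) = f\bigl(y_1^{(1)}, \dots, y_\ell^{(1)}\bigr)$. Since ring automorphisms preserve irreducibility, it is enough to prove that $f(y_1^{(1)}, \dots, y_\ell^{(1)})$ is irreducible in $\mathbb{C}[y_1^{(1)}, \dots, y_\ell^{(k)}]$.

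For this last step, $f$ is irreducible in the subring $R := \mathbb{C}[y_1^{(1)}, \dots, y_\ell^{(1)}]$ by hypothesis, and the larger ring is the polynomial ring $R[\,y_i^{(u)} : u \geq 2\,]$. If we had a factorization $f = gh$ in this larger ring, then for each new variable $y_i^{(u)}$ (with $u \geq 2$) we would have $\deg_{y_i^{(u)}}(g) + \deg_{y_i^{(u)}}(h) = \deg_{y_i^{(u)}}(f) = 0$, forcing both $g$ and $h$ to lie in $R$; irreducibility of $f$ in $R$ then makes one of them a unit, which is also a unit of the larger ring. This contradicts a nontrivial factorization, completing the proof. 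There is no real obstacle to this argument beyond noticing the right linear change of variables; once the invertible substitution above is in place, every remaining step is a standard UFD fact.
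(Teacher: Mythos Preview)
Your proof is correct and considerably more direct than the paper's own argument. The paper proceeds by contradiction: assuming $f_e = gh$, it first shows that $f_e$ is irreducible when viewed in $\mathbb{C}(x^{(1)},\dots,\widehat{x^{(w)}},\dots,x^{(k)})[x^{(w)}]$ for each $w$ (via the affine change $x_i \mapsto e_w x_i^{(w)} + b_i$), deduces that the variable blocks $x^{(1)},\dots,x^{(k)}$ can be partitioned between $g$ and $h$, then uses the algebraic closedness of $\mathbb{C}$ to pick a zero of $h$ and the surjectivity of the remaining affine maps to produce a point where $f_e$ must simultaneously vanish and not vanish.

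Your approach sidesteps all of this by observing that the single invertible linear substitution $y_i^{(1)} = \sum_u e_u x_i^{(u)}$, $y_i^{(u)} = x_i^{(u)}$ ($u\ge 2$) carries $f_e$ to $f(y_1^{(1)},\dots,y_\ell^{(1)})$, so the problem reduces to the trivial fact that an irreducible polynomial stays irreducible upon adjoining new indeterminates over an integral domain. This is genuinely simpler: it uses only that $e_1 \neq 0$ (rather than all $e_u \neq 0$), avoids the block-partition bookkeeping, and does not appeal to $\mathbb{C}$ being algebraically closed. The paper's argument, by contrast, makes more visible \emph{why} a putative nontrivial factor cannot exist (it would force $f$ to vanish identically along an affine slice), which is perhaps more geometric but at the cost of length.
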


\begin{proof}
The idea of the proof is the following: we will assume the contrary, i.e. $f_e = g \cdot h$ and we will show that by the irreducibility of $f$ one can partition the variables appearing in $g$ and $h$. This allows us to evaluate in the variables annihilating $h$ (and therefore also $f_e$). On the other hand with the rest of the variables appearing in $f_e$, which are not fixed yet, we can yield $f_e$ to be non-zero, which brings the desired contradiction. In what follows we will do this in detail.

Let us fix the notation $x^{(w)} = \left( x_1^{(w)}, x_2^{(w)}, \ldots, x_{\ell}^{(w)} \right)$, for all $w \in \{1,\ldots, k\}$.
We suppose that
\begin{equation*}
f_e=g\left( x^{(1)}, \ldots, x^{(k)}\right)h\left( x^{(1)}, \ldots, x^{(k)}\right).
\end{equation*}
This factorization also holds in 
\begin{equation*}
S^{(1)} := \mathbb{C}\left(x^{(2)}, \ldots,  x^{(k)}\right)\left[x^{(1)} \right].
\end{equation*}
First we show that $f_e$ is irreducible in $S^{(1)}$: in  $S^{(1)}$ $f_e$ can be written as 
\begin{equation*}
f\left(e_1x_1^{(1)} + b_1, \ldots, e_1x_{\ell}^{(1)}+b_{\ell}\right),
\end{equation*}
where $b_i \in \mathbb{C}\left(x^{(2)}, \ldots,  x^{(k)}\right)$ for all $i \in \{1, \ldots, \ell\}$.
Since $e_1x_i^{(1)}+b_i$ is an affine transformation and $f$ is irreducible, we have that $f_e$ is irreducible  in $S^{(1)}$.
This forces, that for all $i \in \{1, \ldots, \ell\}$ the variable $x_i^{(1)}$  will not appear in $g$ and $h$ at the same time.
Repeating an analogous  argument for $S^{(w)} = \mathbb{C}\left(x^{(1)}, \ldots, x^{(w-1)}, x^{(w+1)}, \ldots, x^{(k)} \right) \left[ x^{(w)} \right]$ for all $w \in \{1, \ldots, k \}$ 
we can partition the blocks of variables $x^{(w)}$ for $g$ and $h$. So we can write
\begin{equation*}
f_e=g\left( x^{(u_1)},  \ldots, x^{(u_s)}\right)h\left( x^{(v_1)}, \ldots, x^{(v_t)}\right),
\end{equation*}
for some $s,t \in \mathbb{N}$. Without loss of generality we may assume that
\begin{equation*}
f_e=g\left( x^{(1)},  \ldots, x^{(s)}\right)h\left( x^{(s+1)}, \ldots, x^{(k)}\right).
\end{equation*}
By contradiction, let us assume that $f_e$ is reducible, so that we can choose a non-trivial factorization with $1\leq s< k$.
Now we evaluate $x^{(j)}$ at  $c^{(j)} = \left( c_1^{(j)}, \ldots, c_{\ell}^{(j)}  \right) \in \mathbb{C}^{\ell}$ for $j \in \{s+1,  \ldots, k \}$ in such a way, that 
\begin{equation*}
h\left(c^{(s+1)}, \ldots, c^{(k)}\right) = 0.
\end{equation*}
This is possible, since $\mathbb{C}$ is algebraically closed and $h$ is not a constant.
Thus, also $f_e$ evaluated at $c^{(s+1)},\dots, c^{(k)}$ is zero.

Observe that the maps
\begin{eqnarray*}
\psi_i: \mathbb{C}^k & \to & \mathbb{C} \\
\left(x_i^{(1)}, \ldots, x_i^{(k)}\right) & \mapsto & \sum_{j=1}^k e_jx_i^{(j)}
\end{eqnarray*}
are surjective for all $i \in \{1, \ldots, \ell\}$.
Then
\begin{equation*}
f_e\left(x_1^{(1)}, \ldots, x_{\ell}^{(k)}\right) =f\left(\psi_1\left(x_1^{(1)}, \ldots, x_1^{(k)}\right), \ldots, \psi_{\ell}\left(x_{\ell}^{(1)}, \ldots, x_{\ell}^{(k)}\right)\right).
\end{equation*}
Hence
\begin{eqnarray*}
0 &=& g\left( x^{(1)},  \ldots, x^{(s)}\right)h\left( c^{(s+1)}, \ldots, c^{(k)}\right) = f_e\left(x^{(1)},  \ldots, x^{(s)}, c^{(s+1)}, \ldots, c^{(k)}\right) \\
&=& f\left(\psi_1\left(x_1^{(1)}, \ldots, x_1^{(s)}, c_1^{(s+1)}, \ldots, c_1^{(k)}\right), \ldots, \psi_{\ell}\left(x_{\ell}^{(1)}, \ldots, x_{\ell}^{(s)}, c_{\ell}^{(s+1)}, \ldots, c_{\ell}^{(k)}\right)\right) \\
&=& f\left( \phi_1\left(x_1^{(1)}, \ldots, x_1^{(s)}\right), \ldots, \phi_{\ell}\left(x_{\ell}^{(1)}, \ldots, x_{\ell}^{(s)}\right)\right),
\end{eqnarray*}
where for all $ i \in \{1, \ldots, \ell\}$
\begin{equation*}
\phi_i\left(x_i^{(1)}, \ldots, x_i^{(s)}\right) := \psi_i\left(x_i^{(1)}, \ldots, x_i^{(s)}, c_i^{(s+1)}, \ldots, c_i^{(k)}\right),
\end{equation*}
which are again surjective, because $e_i \neq 0$, for all $i \in \{1, \ldots, k\}$. 
Let $z=(z_1, \ldots z_{\ell}) \in \mathbb{C}^{\ell}$ be such that $f(z) \neq 0.$
To get a contradiction, we want to find $ \left( a_j^{(1)}, \ldots, a_j^{(s)}  \right) \in \mathbb{C}^s$ for all $j \in \{1, \ldots, \ell \}$, such that
\begin{equation*}
f\left(\phi_1\left( a_1^{(1)}, \ldots, a_1^{(s)}\right), \ldots, \phi_{\ell}\left( a_{\ell}^{(1)}, \ldots, a_{\ell}^{(s)}\right)\right) \neq 0.
\end{equation*}
For this we use the surjectivity of $\phi_j$ for all $ j \in \{1, \ldots, \ell\}$, and that $f(z) \neq 0$:  we choose $ \left( a_j^{(1)}, \ldots, a_j^{(s)}  \right) \in \mathbb{C}^s$, such that
\begin{equation*}
\phi_j\left(a_j^{(1)}, \ldots, a_j^{(s)}\right) = z_j.
\end{equation*}
With this choice we got the desired contradiction, as
\begin{eqnarray*}
0 & \neq &  f\left(\phi_1\left( a_1^{(1)}, \ldots, a_1^{(s)}\right), \ldots, \phi_{\ell}\left( a_{\ell}^{(1)}, \ldots, a_{\ell}^{(s)}\right)\right) \\
&= & f_e\left(a^{(1)}, \ldots, a^{(s)}, c^{(s+1)}, \ldots, c^{(k)}\right) = 0.
\end{eqnarray*}
\end{proof}

\begin{remark}\label{sigma_irreducible}
Let $L$ be the Galois closure of $K/\mathbb Q$. Notice that, if $f(x_1, \ldots, x_{\ell})$ is irreducible in $\mathcal{O}_K[x_1, \ldots x_{\ell}]$, then we have that $\sigma(f)$ is irreducible in $ \sigma(\mathcal{O}_K)[x_1, \ldots x_{\ell}]$ for all $\sigma \in \Gal(L/  \mathbb{Q})$.
\end{remark}

\begin{lemma}\label{Norm_coprime}
Let $M$ be a positive integer and let $N$ be the norm map for the extension field  $K(x_1,\dots x_M)/\mathbb Q(x_1,\dots x_M)$. Let $F,G \in \mathcal{O}_K[x_1, \ldots, x_M]$ be irreducible and such that there are variables appearing in $F$ but not in $G$, then $N(F)$ and $N(G)$ are coprime.  
\end{lemma}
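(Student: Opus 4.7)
The plan is to work in the UFD $L[x_1,\ldots,x_M]$, where $L$ is the Galois closure of $K/\mathbb Q$, and show that the irreducible factorizations of $N(F)$ and $N(G)$ share no common factor. By definition
\begin{equation*}
N(F)=\prod_{\sigma\in\Gal(L/\mathbb Q)}\sigma(F),\qquad N(G)=\prod_{\sigma\in\Gal(L/\mathbb Q)}\sigma(G),
\end{equation*}
and a common factor in $\mathbb{Q}[x_1,\ldots,x_M]$ would still be a common factor in the bigger ring $L[x_1,\ldots,x_M]$, so coprimality over $L$ will imply the conclusion.

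The first step is the key bookkeeping observation that $\sigma$ acts only on coefficients, so for every $\sigma\in\Gal(L/\mathbb Q)$ the set of variables appearing (with nonzero coefficient) in $\sigma(F)$ coincides with the set of variables appearing in $F$, and analogously for $G$ and $\sigma(G)$. This is what will clash with the hypothesis.

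The second step is to identify the above products as prime factorizations in $L[x_1,\ldots,x_M]$. By Remark \ref{sigma_irreducible}, each $\sigma(F)$ is irreducible in $\sigma(\mathcal O_K)[x_1,\ldots,x_M]$; in the applications of this lemma (in particular to the determinant polynomial of Remark \ref{determinant_irreducible}), this irreducibility persists over any integral domain, hence over $L$. The same applies to $\sigma(G)$. Thus $N(F)$ and $N(G)$ factor into irreducibles in $L[x_1,\ldots,x_M]$ exactly as the Galois products above.

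Finally, I would argue by contradiction: if $N(F)$ and $N(G)$ had a common nonconstant factor in $L[x_1,\ldots,x_M]$, unique factorization would force some $\sigma(F)$ to be associate to some $\tau(G)$, i.e.\ $\sigma(F)=c\,\tau(G)$ for some $c\in L^\times$. But by hypothesis there is a variable $x_i$ appearing in $F$ and not in $G$; by the first step, $x_i$ then appears in $\sigma(F)$ and does not appear in $\tau(G)$, contradicting the equality $\sigma(F)=c\,\tau(G)$. Hence no such common factor exists and $N(F),N(G)$ are coprime. The only delicate point I expect is the upgrade of irreducibility from $\mathcal O_K[x_1,\ldots,x_M]$ to $L[x_1,\ldots,x_M]$; this is automatic in the cases the lemma is applied to (determinant-type polynomials) but would otherwise require assuming absolute irreducibility in the hypothesis.
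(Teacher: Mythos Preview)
Your argument is correct and follows essentially the same route as the paper: take an irreducible common divisor $h$, use the product formula for the norm to get $h\mid\sigma(F)$ and $h\mid\tau(G)$ for some $\sigma,\tau$, invoke irreducibility of $\sigma(F)$ and $\tau(G)$ to force $\sigma(F)$ and $\tau(G)$ to be associates, and contradict the variable hypothesis. You are in fact more careful than the paper in two respects: you fix the ambient UFD $L[x_1,\ldots,x_M]$ explicitly, and you flag the passage from irreducibility over $\mathcal O_K$ to irreducibility over $L$ as the delicate point. The paper's proof glosses over this; but as you correctly note, in the only application (to the polynomials $(m_i)_e$) Lemma~\ref{Basis_irreducible} already establishes irreducibility over $\mathbb C$, so the upgrade is automatic there.
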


\begin{proof}
Let $L$ be the Galois closure of $K/ \mathbb{Q}$.
Let us assume that there exists a $h \mid N(F)$ and $h \mid N(G)$. By the definition of the norm over $K(x_1,\dots x_M)/\mathbb Q(x_1,\dots x_M)$, it follows that $h \mid \sigma(F)$ and $h \mid \tau(G)$, for some $\sigma, \tau \in \Gal(L/\mathbb{Q}). $ By Remark \ref{sigma_irreducible} we have that $\sigma(F)$ and $\tau(G)$ are irreducible in $ \sigma(\mathcal{O}_K)[x_1, \ldots x_M]$. Hence either $h=1$ (in which case we are done), or $h=\sigma(F)= \tau(G)$, which contradicts the assumption that there are variables of $F$ not appearing in $G$. 
\end{proof}

\begin{theorem}\label{density_unimodular_matrices}
Let $n$ and $m$ be positive integers such that $n<m$ and $K$ be an algebraic number field. The density of $n \times m $  rectangular unimodular matrices over $\mathcal{O}_K$ is
\[\prod_{i=0}^{n-1} \frac{1}{\zeta_K(m-i)},\]
where $\zeta_K$ denotes the Dedekind zeta function of $K$.
\end{theorem}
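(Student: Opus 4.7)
My plan is to fix an integral basis $\mathbb{E}=\{e_1,\dots,e_k\}$ of $\mathcal{O}_K$, use it to identify $\mathcal{O}_K^{nm}$ with $\mathbb{Z}^{d}$ for $d=knm$, and then to invoke the Poonen--Stoll local-to-global principle (Theorem~\ref{poonen}). I would set $U_\infty=\emptyset$, and, for each rational prime $p$, viewing $\mathbb{Z}_p^{d}$ as $\mathcal{O}_K^{nm}\otimes_{\mathbb{Z}}\mathbb{Z}_p$, let
\[
U_p \;=\; \{M\in \mathbb{Z}_p^{d} : M\bmod \mathfrak{p} \text{ has rank }<n \text{ for some prime } \mathfrak{p}\mid p\}.
\]
By Proposition~\ref{characterization_unimodular} a matrix is rectangular unimodular if and only if it lies in no $U_p$, equivalently the map $P$ of Theorem~\ref{poonen} sends it to $\emptyset$. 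Since $U_p$ is determined by the reduction modulo $p$, it is a clopen subset of $\mathbb{Z}_p^d$, so automatically $\mu_p(\partial U_p)=0$.

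For the measures $s_p=\mu_p(U_p)$, I would combine the elementary count of full-rank $n\times m$ matrices over a finite field of cardinality $q$, giving probability $\prod_{i=0}^{n-1}(1-q^{i-m})$, with the mutual independence of the residue conditions at the distinct primes $\mathfrak{p}\mid p$ (coming from the Chinese remainder decomposition of $\mathcal{O}_K\otimes\mathbb{Z}_p$), to obtain
\[
1-s_p \;=\; \prod_{\mathfrak{p}\mid p}\prod_{i=0}^{n-1}\bigl(1-N(\mathfrak{p})^{-(m-i)}\bigr).
\]

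The real work, and the step that was handled incorrectly in~\cite{bib:maze2011natural}, is the verification of hypothesis~\eqref{densitiycond}. My plan here is to reduce to Lemma~\ref{showdens}. Since $n<m$ forces $\binom{m}{n}\geq 2$, I would pick two distinct $n$-element column subsets $A,B\subset\{1,\dots,m\}$ and let $F, G$ be the corresponding $n\times n$ minors of the generic matrix $(x_{i,j})$; by Remark~\ref{determinant_irreducible} both are irreducible in $\mathcal{O}_K[x_{i,j}]$, and the same argument works over $\mathbb{C}[x_{i,j}]$. Substituting $x_{i,j}=\sum_{u=1}^{k} y_{i,j}^{(u)} e_u$ yields polynomials $F_e, G_e\in \mathcal{O}_K[y_{i,j}^{(u)}]$ which remain irreducible by Lemma~\ref{Basis_irreducible}, and any $j_0\in A\setminus B$ supplies variables $y_{i,j_0}^{(u)}$ occurring in $F_e$ but not in $G_e$. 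Lemma~\ref{Norm_coprime} then produces coprime polynomials $N(F_e), N(G_e)\in \mathbb{Z}[y_{i,j}^{(u)}]$. If $a\in U_p$ with $p>M$, some $\mathfrak{p}\mid p$ kills every $n\times n$ minor of the corresponding matrix, so $F_e(a), G_e(a)\in\mathfrak{p}$ and hence $N(F_e)(a)\equiv N(G_e)(a)\equiv 0\pmod{p}$; the bad set thus sits inside $S_M(N(F_e),N(G_e))$, whose upper density tends to~$0$ by Lemma~\ref{showdens}.

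To finish, I would apply Theorem~\ref{poonen}(iii) at the singleton $\{\emptyset\}$ to conclude that the density of rectangular unimodular matrices equals
\[
\prod_p (1-s_p) \;=\; \prod_{\mathfrak{p}}\prod_{i=0}^{n-1}\bigl(1-N(\mathfrak{p})^{-(m-i)}\bigr) \;=\; \prod_{i=0}^{n-1}\zeta_K(m-i)^{-1},
\]
by the Euler product for the Dedekind zeta function. Because this final expression is manifestly independent of the integral basis~$\mathbb{E}$, the density is intrinsic to $K$, as required.
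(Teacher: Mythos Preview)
Your proposal is correct and follows essentially the same approach as the paper: the same choice of $U_\infty=\emptyset$ and $U_p$, the same verification of hypothesis~\eqref{densitiycond} via Remark~\ref{determinant_irreducible}, Lemma~\ref{Basis_irreducible}, Lemma~\ref{Norm_coprime} and Lemma~\ref{showdens}, and the same final application of Theorem~\ref{poonen}(iii). The only cosmetic difference is that you compute $1-s_p$ directly from the decomposition $\mathcal{O}_K\otimes_{\mathbb{Z}}\mathbb{Z}_p\cong\prod_{\mathfrak{p}\mid p}\mathcal{O}_{K_\mathfrak{p}}$, whereas the paper works through the explicit maps $\overline{\pi}_p,\overline{\mathbb{E}}_p,\overline{\psi}_p$ and counts $\lvert\Ker(\overline{\psi}_p)\rvert$ by hand to reach the same formula.
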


\begin{remark}
Notice that this is independent of the chosen basis. 
\end{remark}

\begin{proof}
To start the proof we need to set up a family of subsets $U_p$ of $\mathbb Z_p^{knm}$ for which condition \eqref{densitiycond} is verified, and such that $P^{-1}(\{\emptyset\})$ is the set of rectangular unimodular matrices over $\mathcal O_K$. In what follows, we actually construct their complements, that are nicer to deal with when it comes to Haar measures.

Let $k = [K: \mathbb{Q}]$ and $\{e_1, \ldots, e_k\}$ be an integral basis of $\mathcal{O}_K$. Let us consider the following maps: the usual reduction modulo a rational prime $p$
\begin{equation*}
\pi_p: \mathbb{Z}_p   \to   \mathbb{F}_p,
\end{equation*}
the bijective map 
\begin{eqnarray*}
\mathbb{E}_p: \mathbb{F}_p^k & \to & \mathcal{O}_K/(p) \\
(a_1, \ldots, a_k) & \mapsto & \sum_{i=1}^k a_ie_i,
\end{eqnarray*}
and the natural surjective map
\begin{equation*}
\psi_p: \mathcal{O}_K/(p)   \to   \prod_{p \mid \mathfrak{p}} \left( \mathcal{O}_K/ \mathfrak{p}\right).
\end{equation*}
Observe that these maps can be extended componentwise to the following maps:
\begin{eqnarray*}
\overline{\pi}_p: \mathbb{Z}_p^{knm} & \to &  \mathbb{F}_p^{knm},
\\
\overline{\mathbb{E}}_p: \mathbb{F}_p^{knm} &\to &  (\mathcal{O}_K/(p))^{n \times m}, \\
\overline{\psi}_p: (\mathcal{O}_K/(p))^{n \times m} &\to &  \prod_{ \mathfrak{p} \mid p} \left(\mathcal{O}_K/ \mathfrak{p}\right)^{n \times m}.
\end{eqnarray*}
We get the following composition $F_p= \overline{\psi}_p \circ \overline{\mathbb{E}}_p \circ \overline{\pi}_p$:
\begin{equation*}
 \mathbb{Z}_p^{knm} \overset{\overline{\pi}_p}{\longrightarrow} \mathbb{F}_p^{knm} \overset{\overline{\mathbb{E}}_p}{\longrightarrow } (\mathcal{O}_K/(p))^{n \times m} \overset{\overline{\psi}_p}{\longrightarrow} \prod_{\mathfrak{p} \mid p}( \mathcal{O}_K/\mathfrak{p})^{n \times m}.
\end{equation*}
Observe that $F_p$ is a composition of surjective maps, hence also surjective.
Let $\{\mathfrak{p}_1, \ldots, \mathfrak{p}_{\ell_p}\}$ be the set of prime ideals of $\mathcal O_K$ lying above $p$. Let $T_p = \prod_{\mathfrak{p} \mid p}( \mathcal{O}_K/\mathfrak{p})^{n \times m}$.
To easier the notation, let us define the following subset of $T_p$
\begin{equation*}
\mathcal{L}_p = \left\{ \left(a_{\mathfrak{p}_1}, \ldots, a_{\mathfrak{p}_{\ell_p}} \right) \in T_p \mid a_{\mathfrak{p}_i} \in \mathbb{F}_{p^{\deg(\mathfrak{p}_i)}}^{n \times m} \ \text{has full rank} \right\}.
\end{equation*}

Consider now the following set
\begin{equation*}
A_p = \left\{ A \in \mathbb{Z}_p^{knm}  \mid F_p(A) \in \mathcal{L}_p   \right\}.
\end{equation*}

We are interested in computing the Haar measure of $A_p$, which will be needed in the last part of the proof.
Notice that 
\begin{equation*}
A_p = \bigsqcup_{f \in \mathcal{L}_p} F_p^{-1}(f).
\end{equation*}
Hence we have that
\begin{equation*}
\mu_p(A_p) = \sum_{f \in \mathcal{L}_p} \mu_p(F_p^{-1}(f)).
\end{equation*}

We first want to compute $\mu_p(F_p^{-1}(f))$: let $f$ be in $T_p$, then observe that we can write 
\begin{equation*}
\mu_p(F_p^{-1}(f)) = \mu_p(\overline{\pi}_p^{-1} \overline{\mathbb{E}}_p^{-1}\overline{\psi}_p^{-1}(f)). 
\end{equation*}
Since $\overline{\psi}_p$ is $\mathbb{F}_p$-linear, there exists an $\bar{f} \in (\mathcal{O}_K/(p))^{n \times m}$ with
\begin{equation*}
\overline{\psi}_p^{-1}(f)) = \bar{f} + \Ker(\overline{\psi}_p).
\end{equation*}
Hence we can write
\begin{equation*}
\mu_p(F_p^{-1}(f) = \mu_p(\overline{\pi}_p^{-1}( \overline{\mathbb{E}}_p^{-1}(\bar{f} + \Ker(\overline{\psi}_p)))). 
\end{equation*}
Since $\overline{\mathbb{E}}_p$ is bijective and the kernel of $\overline{\pi}_p$ is $p\mathbb{Z}_p^{knm}$, there exist $f_1, \ldots, f_{\mid \Ker(\overline{\psi}_p) \mid } \in \mathbb{Z}_p^{knm}$, such that we can write
\begin{equation*}
F_p^{-1}(f) = \bigsqcup_{i=1}^{ \mid \Ker(\overline{\psi}_p) \mid} \left( f_i + p \mathbb{Z}_p^{knm} \right).
\end{equation*}
Notice that the Haar measure of  $\left( f_i + p \mathbb{Z}_p^{knm} \right)$ is $p^{-knm}$, independently of $f_i$. 
Hence the Haar measure of $F_p^{-1}(f)$ is
\begin{equation*}
\mu_p( F_p^{-1}(f) ) = \mid \Ker( \overline{\psi}_p) \mid p^{-knm}.
\end{equation*}
Since this is independent of the chosen $f \in T_p$, one has that
\begin{equation*}
\mu_p( A_p) = \sum_{f \in \mathcal{L}_p} \mu_p(F_p^{-1}(f))= \mid \mathcal{L}_p \mid \cdot   \mid \Ker( \overline{\psi}_p) \mid p^{-knm}.
\end{equation*}
Let us first compute the size of the the kernel of  $\overline{\psi}_p$.
Since  
\begin{equation*}
\psi_p: \mathcal{O}_K/(p) \to \prod_{\mathfrak{p} \mid p} \mathcal{O}_K/ \mathfrak{p}
\end{equation*}
 is surjective, we have that
\begin{equation*}
\dim_{\mathbb{F}_p}\left(\Ker(\psi_p)\right) = \dim_{\mathbb{F}_p}\left( \mathcal{O}_K/(p)\right) - \dim_{\mathbb{F}_p}\left( \prod_{\mathfrak{p} \mid p} \mathcal{O}_K/ \mathfrak{p}\right) = k - \sum_{\mathfrak{p} \mid p} \deg(\mathfrak{p}).
\end{equation*}
Hence for  
\begin{equation*}
\overline{\psi}_p: (\mathcal{O}_K/(p))^{n \times m} \to \prod_{\mathfrak{p} \mid p} (\mathcal{O}_K/ \mathfrak{p})^{n \times m}
\end{equation*}
we have that 
\begin{equation*}
\dim_{\mathbb{F}_p}\left(\Ker(\overline{\psi}_p)\right) = knm - \sum_{\mathfrak{p} \mid p} \deg(\mathfrak{p}) nm.
\end{equation*}
Therefore 
\begin{equation*}
\mid \Ker(\overline{\psi}_p) \mid = p^{knm - \sum_{\mathfrak{p} \mid p} \deg(\mathfrak{p})nm}.
\end{equation*}

The next step is to compute the size of $\mathcal{L}_p$. 
Since $\mathcal{O}_K/ \mathfrak{p} \cong \mathbb{F}_{p^{\deg(\mathfrak{p})}}$ and for a prime power $q$ the number of full rank matrices over $\mathbb{F}_q^{n \times m}$ is 
\begin{equation*}
\prod_{i=0}^{n-1} \left(q^m-q^i\right),
\end{equation*}
we have that 
\begin{equation*}
\mid \mathcal{L}_p \mid = \prod_{\mathfrak{p} \mid p} \prod_{i=0}^{n-1} \left(  p^{\deg(\mathfrak{p})m}- p^{\deg(\mathfrak{p})i} \right).
\end{equation*}

Thus the Haar measure of $A_p$ is given by 
\begin{eqnarray*}
 \mu_p \left( A_p \right) &= &
 p^{-knm} \mid \Ker( \overline{\psi}_p) \mid \cdot \mid \mathcal{L}_p \mid \\  
   &=&  \frac{1}{p^{knm}}  \left( p^{knm - \sum_{ \mathfrak{p} \mid p}
 \deg(\mathfrak{p})nm} \right)  
    \prod_{\mathfrak{p} \mid p} 
   \prod_{i=0}^{n-1} \left(  p^{\deg(\mathfrak{p})m}-
  p^{\deg(\mathfrak{p})i} \right)\\  
&=&p^{ - \sum_{ \mathfrak{p} \mid p}
 \deg(\mathfrak{p})nm}   \prod_{\mathfrak{p} \mid p} \prod_{i=0}^{n-1} \left(  p^{\deg(\mathfrak{p})m}- p^{\deg(\mathfrak{p})i} \right)  \\
&=&  \prod_{\mathfrak{p} \mid p} \prod_{i=0}^{n-1}  p^{-\deg(\mathfrak{p})m} \left( p^{\deg(\mathfrak{p})m}- p^{\deg(\mathfrak{p})i} \right)   \\
&=&  \prod_{\mathfrak{p} \mid p} \prod_{i=0}^{n-1}  \left( 1- p^{\deg(\mathfrak{p})(i-m)} \right) .
\end{eqnarray*}

Finally we are ready to use Theorem \ref{poonen} to compute the wanted density. Let now $U$ be the set of rectangular unimodular $n \times m$ matrices over $\mathcal{O}_K$, notice that thanks to Proposition \ref{characterization_unimodular} we can write 
\begin{equation*}
U = \left\{M \in \Mat_{n \times m}(\mathcal{O}_K) \mid M \bmod \mathfrak{p}   \ \text{ has full rank for any prime ideal} \ \mathfrak{p} \subset \mathcal{O}_K \right\}.
\end{equation*}

Since we want to use Theorem \ref{poonen}, we want to choose a family $\{U_\nu\}_{\nu \in M_{\mathbb{Q}}}$ such that for $\mathcal{S} = \{\emptyset\}$, we have that $P^{-1}(\{\emptyset\}) = U$.
For this, let us choose $U_{\infty} = \emptyset$, then clearly $s_{\infty} = 0$. 
Let $U_p = A_p^c = \mathbb{Z}_p^{knm} \setminus A_p$.
We have that 
\begin{equation*}
s_p = \mu_p(U_p) = 1- \mu_p(A_p)= 1-\prod_{\mathfrak{p} \mid p} \prod_{i=0}^{n-1}  \left( 1- p^{\deg(\mathfrak{p})(i-m)} \right).
\end{equation*}
from which it follows that, if \eqref{densitiycond} holds, using Theorem \ref{poonen}, we get
\begin{eqnarray*}
\rho(U)= \rho(P^{-1}(\{\emptyset\}) &=& \prod_{\nu \in \emptyset} s_{\nu} \prod_{\nu \not\in \emptyset} (1-s_{\nu}) \\
&=& (1- s_{\infty}) \prod_{p \ \text{prime}} (1-s_p) \\
&=&  \prod_{p \ \text{prime}} \prod_{\mathfrak{p} \mid p} \prod_{i=0}^{n-1}  \left( 1- p^{\deg(\mathfrak{p})(i-m)} \right) \\
& =&   \prod_{i=0}^{n-1} \prod_{p \ \text{prime}} \prod_{\mathfrak{p} \mid p}  \left( 1 - \frac{1}{p^{\deg(\mathfrak{p})(m-i)} } \right) \\
&=&  \prod_{i=0}^{n-1} \frac{1}{\zeta_K(m-i)},
\end{eqnarray*}
which is exactly the final claim. Therefore, it remains to show that \eqref{densitiycond} holds.

Let 
\begin{eqnarray*}
\mathbb{E}: \mathbb{Z}^{k} &\to& \mathcal{O}_K, \\
(a_1, \ldots, a_k) & \mapsto & \sum_{i=1}^k a_ie_i.
\end{eqnarray*}
and $\overline{\mathbb{E}}$ be its natural componentwise extension from $\mathbb Z^{knm}$ to $\mathcal{O}_K^{n \times m}$.

For $A \in \mathbb{Z}^{knm}$, let us denote the $ n \times n$ minors of $\overline{\mathbb{E}}(A)$ by $A_i$ for $i \in \left\{ 1, \ldots, {m\choose n}  \right\}$. Hence $A \in U_p$ is equivalent to $\langle A_1, \ldots, A_{{m\choose n}} \rangle \subseteq \mathfrak{p}$ for some $\mathfrak{p} \mid p$. Thus for all $i \in \left\{ 1, \ldots, {m\choose n}  \right\}$ we have that $\langle A_i \rangle \subseteq\mathfrak{p}$ and hence that $N_{K / \mathbb{Q}}(A_i) \equiv 0 \mod p$. This implies that  $p \mid N_{K / \mathbb{Q}}(A_i)$ for all $i \in \left\{ 1, \ldots, {m\choose n}  \right\}$  and hence that 
\begin{equation*}
p \mid \gcd\left(N_{K / \mathbb{Q}}(A_1), \ldots, N_{K / \mathbb{Q}}(A_{{m\choose n}})\right).
\end{equation*}

Observe that by the consideration above, the set
\begin{equation*}
 \left\{ A \in \mathbb{Z}^{knm} \mid     A \in U_p \ \text{for some prime} \ p>M \right\}
\end{equation*}
is a subset of 
\begin{equation*}
B_M=\left\{ A \in \mathbb{Z}^{knm} \mid    p \mid N_{K / \mathbb{Q}}(A_1), p \mid N_{K / \mathbb{Q}}(A_2)  \ \text{for some prime} \ p>M \right\} .
\end{equation*}
For \eqref{densitiycond} to be satisfied, it remains to show that $B_M$ has upper density tending to zero, for $M$ going to infinity. In order to prove that, we want to use Lemma \ref{showdens} but for that we need a pair of coprime polynomials.
Let us consider the matrix $B\in \mathbb Q[x_{1,1}, x_{1,2},\dots, x_{n,m}]$ having as $(i,j)$-th entry the variable  $x_{i,j}$.
Using Remark \ref{determinant_irreducible} we know that for all $i \in \left\{ 1, \ldots, {m\choose n}  \right\}$,  the $n \times n$ minors $m_i$ of $A$ are  irreducible  in $\mathbb{C}[x_{1,1}, \ldots, x_{n,m}]$.
Recall that for an integral basis $(e_1, \ldots, e_k)$ for $\mathcal{O}_K$, with Lemma \ref{Basis_irreducible} we have that for all $i \in \left\{ 1, \ldots, {m\choose n}  \right\}$
\begin{equation*}
m_i \left( \sum_{u=1}^k x_{1,1}^{(u)} e_u, \ldots,\sum_{u=1}^k x_{n,m}^{(u)} e_u \right) = (m_i)_e
\end{equation*}
are also irreducible in $\mathcal{O}_K[x_{1,1}^{(1)}, x_{1,1}^{(2)}, \ldots, x_{n,m}^{(k)}]$. Since $(m_1)_e$ and $(m_2)_e$ have different variables appearing, we can use Lemma \ref{Norm_coprime} to conclude that $f=N((m_1)_e)$ and $g=N((m_2)_e)$ are coprime polynomials in $\mathbb{Z}[x_{1,1}^{(1)}, x_{1,1}^{(2)}, \ldots, x_{n,m}^{(k)}]$. 
Observe now that since  $ N_{K / \mathbb{Q}}(A_i) =  N((m_i)_e(A))$, for $i \in \{1,2\}$ we have that
\begin{equation*}
B_M=\left\{ A \in \mathbb{Z}^{knm} \mid    p \mid f(A), p \mid g(A)  \ \text{for some prime} \ p>M \right\} .
\end{equation*}
Hence, using Lemma \ref{showdens} directly we have proven \eqref{densitiycond}, i.e. $\lim_{M \to \infty} \bar{\rho} (B_M) =0 $.

\end{proof}

\begin{remark}\label{daniels_remark}
It is worth noticing that instead of using Lemma \ref{Basis_irreducible} and \ref{Norm_coprime}, one could use a combination of \cite[Proposition 3.2]{bright2016failures}, \cite[Lemma 3.3]{bright2016failures} (and therefore \cite[Theorem 18]{bhargava2015geometry}) in order to prove that condition \eqref{densitiycond} holds.
\end{remark}

As a corollary, setting $n=1$ one gets the generalisations of the Mertens-Ces\'{a}ro Theorem, which coincides with the result in \cite{ferraguti2016mertens} and does not use the lattice based argument \cite[Theorem 3.10]{ferraguti2016mertens}:

\begin{corollary}
For $m \in \mathbb{N}$, the density of coprime $m$-tuples over the algebraic integers $\mathcal{O}_K$ of a number field $K$ is $\frac{1}{\zeta_K(m)}$. 
\end{corollary}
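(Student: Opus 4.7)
The plan is to deduce this corollary directly from Theorem \ref{density_unimodular_matrices} by specialising to the case $n = 1$, which requires essentially no new work beyond identifying the dictionary between the two notions.

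First, I would translate the notion of a coprime $m$-tuple into the language of rectangular unimodular matrices. An $m$-tuple $(a_1, \ldots, a_m) \in \mathcal{O}_K^m$ is coprime precisely when the ideal $\langle a_1, \ldots, a_m \rangle$ equals $\mathcal{O}_K$. Viewed as a $1 \times m$ matrix, its $1 \times 1$ minors are exactly the entries $a_i$, so by Proposition \ref{characterization_unimodular} such a matrix is rectangular unimodular if and only if $\langle a_1, \ldots, a_m \rangle$ meets no non-zero prime ideal of $\mathcal{O}_K$, which is in turn equivalent to $\langle a_1, \ldots, a_m \rangle = \mathcal{O}_K$. This is precisely coprimality, as already noted in the comment immediately following Proposition \ref{characterization_unimodular}.

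Next, I would invoke Theorem \ref{density_unimodular_matrices} with $n = 1$ and $m \geq 2$ (the only range in which the notion of coprime $m$-tuple is non-trivial, since a single element is a unit with density zero). The density formula collapses to the single factor
\begin{equation*}
\prod_{i=0}^{0} \frac{1}{\zeta_K(m-i)} = \frac{1}{\zeta_K(m)},
\end{equation*}
which is the claim.

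There is no genuine obstacle, since the entire content of the corollary is contained in the main theorem. The only point worth emphasising is that this derivation bypasses the lattice-based argument of \cite[Theorem 3.10]{ferraguti2016mertens}: the density is obtained purely through the Poonen--Stoll local-to-global principle combined with the computation of the local factors $\mu_p(A_p)$ performed in the proof of Theorem \ref{density_unimodular_matrices}.
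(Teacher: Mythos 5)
Your proposal is correct and is essentially the paper's own derivation: the corollary is obtained by setting $n=1$ in Theorem \ref{density_unimodular_matrices}, using the observation (already made after the characterization of rectangular unimodularity) that coprime $m$-tuples are exactly the $1\times m$ rectangular unimodular matrices over the Dedekind domain $\mathcal{O}_K$. The only nitpick is wording: the condition is that $\langle a_1,\ldots,a_m\rangle$ is \emph{contained in} no non-zero prime ideal (equivalently, the reduction mod every $\mathfrak{p}$ is non-zero), not that it ``meets'' no such ideal.
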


And as another immediate corollary of this result, by setting $K= \mathbb{Q}$, one gets a fix for the result in \cite{bib:maze2011natural}:

\begin{corollary}
For $n<m \in \mathbb{N}$, the density of $n \times m$ rectangular unimodular matrices over $\mathbb{Z}$ is given by
\begin{equation*}
\prod_{i=0}^{n-1} \frac{1}{\zeta(m-i)},
\end{equation*}
where $\zeta$ is the Riemann zeta function.
\end{corollary}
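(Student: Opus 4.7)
The plan is to deduce this corollary as an immediate specialisation of Theorem \ref{density_unimodular_matrices}. The main theorem asserts that for any number field $K$, the density of $n \times m$ rectangular unimodular matrices over $\mathcal{O}_K$ equals $\prod_{i=0}^{n-1} \zeta_K(m-i)^{-1}$. So I would first set $K = \mathbb{Q}$, observe that $[\mathbb{Q}:\mathbb{Q}] = 1$, and recall that the ring of algebraic integers of $\mathbb{Q}$ is exactly $\mathbb{Z}$, so that $\Mat_{n \times m}(\mathcal{O}_K) = \Mat_{n \times m}(\mathbb{Z})$.

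Next I would verify that the notion of density used in Theorem \ref{density_unimodular_matrices} (Definition \ref{definition_density}) reduces to the classical density on $\mathbb{Z}^d$ in Section \ref{sec:introduction} when $K = \mathbb{Q}$. Since $\mathbb{Z}$ admits the canonical integral basis $\mathbb{E} = \{1\}$, the set $\mathcal{O}[B, \mathbb{E}]$ is precisely $[-B, B[ \cap \mathbb{Z}$, and so $\mathcal{O}[B, \mathbb{E}]^m \cap S$ coincides with $S \cap [-B,B[^{m}$ (and more generally with $[-B,B[^{nm}$ in the matrix setting after flattening). Hence the density produced by Theorem \ref{density_unimodular_matrices} is exactly the one asserted in the corollary.

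Finally I would identify the zeta function. Since $\zeta_\mathbb{Q}$ is by definition the Dedekind zeta function of $\mathbb{Q}$, and this equals the Riemann zeta function $\zeta$ (as already noted in Section \ref{sec2}), the product in the conclusion of Theorem \ref{density_unimodular_matrices} becomes $\prod_{i=0}^{n-1} \zeta(m-i)^{-1}$, which is precisely the statement of the corollary. There is no real obstacle here, just a careful bookkeeping of notation: the only thing one needs to double-check is that the definition of density used throughout Section \ref{sec3} genuinely specialises to the classical one for $K = \mathbb{Q}$, which is immediate from the choice of basis $\{1\}$.
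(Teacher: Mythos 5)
Your proposal is correct and follows exactly the paper's route: the corollary is obtained by specialising Theorem \ref{density_unimodular_matrices} to $K=\mathbb{Q}$, where $\mathcal{O}_K=\mathbb{Z}$, the basis $\{1\}$ makes Definition \ref{definition_density} coincide with the classical density, and $\zeta_{\mathbb{Q}}=\zeta$. Your write-up simply makes these bookkeeping checks explicit, which the paper leaves implicit.
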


\section{Experimental results}\label{sec4}

Since Theorem \ref{density_unimodular_matrices} gives only the asymptotic density of the set of rectangular unimodular matrices, we include here some experiments to see how the asymptotics agree with the actual estimates: in the Tables \ref{table_over_integers}, \ref{table_over_sqrt2}, \ref{table_over_sqrt-3},  \ref{table_over_x^5-13x-7},  we performed a Monte-Carlo test on some number fields to understand the accuracy of the result in Theorem \ref{density_unimodular_matrices}. 
In the following tables we fixed the bound on the coefficients of the basis, denoted by $B$ (like in Definition \ref{definition_density}) to be just $3$ and run a 
Monte-Carlo test on $50000$ random matrices over the algebraic integers of the respective number field to check how many are unimodular, the result of this test will be denoted by $\mathbb{P}(U)$. As one can see, it turns out that already for $B=3$ and when the degree of $K$ is greater than or equal to $2$ we have an agreement of around $99\%$.
It seems that for the case of rational integers we have one of the worst situations, with an accuracy of around $90\%$ in the worst case $n=1$ $m=2$, $95\%$ for $n=2 $ $m=3$, and $99\%$ in all other cases.
It is also worth noticing that the accuracy seems to be increasing when the number of columns grows with respect to the number of rows.
For the case of rational integers, in Figure \ref{Graph} we see how quickly the asymptotic density is achieved for fixed values for $n$ and $m$ when the bound $B$ on the coordinates increases. 

The SAGE code is available upon request.

\begin{table}[H]
\begin{subtable}{5 cm}
\begin{tabular}{c|c|c|c}
	$n$ & $m$ & $\mathbb{P}(U)$ & predicted density \\
	\hline
  1& 2&  0.66896 &          0.60792 \\
  1& 3&  0.84234 &          0.83190 \\
  2& 3&  0.53026 &          0.50573 \\
  1& 4&  0.92652 &          0.92393 \\
  2& 4&  0.77268 &          0.76863 \\
  3& 4&  0.47866 &          0.46727 \\
  1& 5&  0.96530 &          0.96438 \\
  2& 5&  0.89246 &          0.89103 \\
  3& 5&  0.74122 &          0.74125 \\
  4& 5&  0.45820 &          0.45063 \\
   \hline
\end{tabular}
\caption{Comparing $\mathbb{P}(U)$ with the predicted density from Theorem \ref{density_unimodular_matrices} over $\mathbb Z$}\label{table_over_integers}
\end{subtable}
\quad \quad \quad \quad \quad
\begin{subtable}{5cm}
\begin{tabular}{c|c|c|c}
	$n$ & $m$ & $\mathbb{P}(U)$ & predicted density \\
	\hline
  1&  2&   0.70308           & 0.69687 \\
  1&  3&   0.86826           & 0.86803 \\
  2&  3&   0.60542           & 0.60491 \\
  1&  4&   0.93628           & 0.93654 \\
  2&  4&   0.81370           & 0.81295 \\
  3&  4&   0.56878           & 0.56652 \\
  1&  5&   0.96812           & 0.96861 \\
  2&  5&   0.90542           & 0.90714 \\
  3&  5&   0.79024           & 0.78743 \\
  4&  5&   0.54838           & 0.54874 \\
\hline
\end{tabular}
\caption{Comparing $\mathbb{P}(U)$ with the predicted density from Theorem \ref{density_unimodular_matrices} over the ring of integers of the number field $\mathbb{Q}(\sqrt{2})$}\label{table_over_sqrt2}
\end{subtable}
\end{table}

\begin{table}[H]
\begin{subtable}{5 cm}
\begin{tabular}{c|c|c|c}
	$n$ & $m$ & $\mathbb{P}(U)$ & predicted density \\
	\hline
1 & 2 & 0.84256 & 0.83941 \\
1 & 3 & 0.95892 & 0.95813\\
2 & 3 & 0.80680 & 0.80427\\
1 & 4 & 0.98754 & 0.98713\\
2 & 4 & 0.94656 & 0.94581\\
3 & 4 & 0.79298 & 0.79393\\
1 & 5 & 0.99620 & 0.99582\\
2 & 5 & 0.98416 & 0.98302\\
3 & 5 & 0.94072 & 0.94186\\
4 & 5 & 0.79192 & 0.79061\\
\hline
\end{tabular}
\caption{Comparing $\mathbb{P}(U)$ with the predicted density from Theorem \ref{density_unimodular_matrices} over the ring of integers of the number field $ \mathbb Q[x]/(x^3+x+1)$}\label{table_over_sqrt-3}
\end{subtable}
\quad \quad \quad \quad \quad
\begin{subtable}{5cm}
\begin{tabular}{c|c|c|c}
	$n$ & $m$ & $\mathbb{P}(U)$ & predicted density \\
	\hline
  1 &  2 &   0.83346 &            0.84149\\ 
  1 &  3 &   0.96832 &            0.97000\\
  2 &  3 &   0.81210 &            0.81625\\
  1 &  4 &   0.99328 &            0.99371\\
  2 &  4 &   0.96516 &            0.96391\\
  3 &  4 &   0.81040 &            0.81112\\
  1 &  5 &   0.99876 &            0.99860\\
  2 &  5 &   0.99244 &            0.99232\\
  3 &  5 &   0.96278 &            0.96256\\
  4 &  5 &   0.80914 &            0.80999\\
   \hline
\end{tabular}
\caption{Comparing $\mathbb{P}(U)$ with the predicted density from Theorem \ref{density_unimodular_matrices}  over the ring of integers of the number field $\mathbb{Q}[x]/(x^5-13x-7)$}\label{table_over_x^5-13x-7}
\end{subtable}
\end{table}

\newpage

\begin{figure}[H]
\includegraphics[width=10cm]{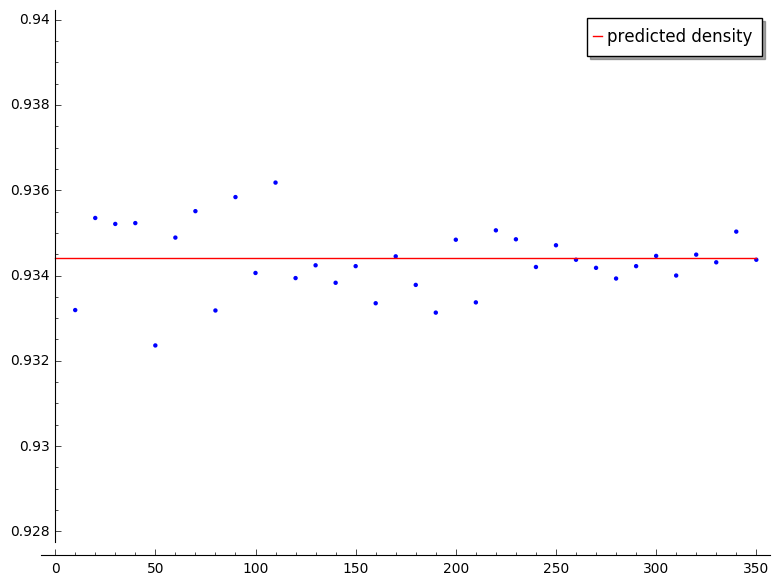}
\caption{Testing 100000 random matrices over the rational integers with fixed $n=5, m= 9$, letting $B$ vary from $10$ to $350$}
\label{Graph}
\end{figure}

\section*{Acknowledgments}
The first author is thankful to the Swiss National Science Foundation grant number 171248. The second author has been partially supported by the Swiss National Science Foundation under grant number 169510. 
We would also like to thank Daniel Loughran for pointing out Remark \ref{daniels_remark} and for interesting discussions and suggestions.

\bibliographystyle{plain}
\bibliography{biblio}

\end{document}